\newtheorem{lemma}{Lemma}
\newtheorem{theorem}{Theorem}
\newtheorem{hypothesis}{Hypothesis}
\makeatletter\@addtoreset{equation}{section}\makeatother
\newcommand{\Rg}{\mathrm{Rg}}
\newcommand{\Ns}{\mathrm{N}}
\newcommand{\rmO}{\mathrm{O}}
\newcommand{\rmd}{\mathrm{d}}
\newcommand{\rme}{\mathrm{e}}
\newcommand{\transv}{\mathrel{\text{\tpitchfork}}}
\newcommand{\tpitchfork}{\vbox{\baselineskip\z@skip
\lineskip-.52ex
\lineskiplimit\maxdimen
\m@th
\ialign{##\crcr\hidewidth\smash{$-$}\hidewidth\crcr$\pitchfork$\crcr}}}
\begin{document}

\title{Most probable escape paths in perturbed gradient systems}

\author{%
Katherine Slyman\thanks{Division of Applied Mathematics, Brown University, Providence, RI~02912, USA}
\and Mackenzie Simper\thanks{School of Medicine, Washington University in St.\ Louis, St.\ Louis, MO~63130, USA}
\and John A. Gemmer\thanks{Department of Mathematics, Wake Forest University, Winston-Salem, NC~27109, USA}
\and Bj\"orn Sandstede\footnotemark[1]
}

\date{\today}
\maketitle



\section{Introduction}

Many natural processes can be modeled by deterministic differential equations that are subjected to small stochastic noise, in which this small noise can cause large deviations from the deterministic flow; these include climate, biological, ecological, and epidemiological systems \cite{forgoston_primer_2018}. The goal of understanding these phenomena demonstrate the demand for developing methods that quantify the impacts of noise in complex systems. A prototype model is given by
\begin{equation}\label{i:sde}
\rmd x = f(x)\, \rmd t + \epsilon\, \rmd W_t,
\end{equation}
where $x\in\mathbb{R}^n$ describes the state of the system, the smooth nonlinearity $f\colon\mathbb{R}^n\rightarrow\mathbb{R}^n$ drives the deterministic dynamics, and $W_t$ is a standard $n$-dimensional Brownian motion that reflects stochastic noise with amplitude $0<\epsilon\ll1$. Without noise, trajectories of the deterministic system
\begin{equation}\label{i:ode}
\dot{x} = f(x)
\end{equation}
will typically approach stable attractors. We are especially interested in the scenario where stable attractors consist of a finite set of asymptotically stable equilibria. In this situation, it is known that small noise can lead to large deviations from the deterministic dynamics: for instance, a solution that lies in the domain of attraction $\mathcal{A}$ of the stable equilibrium $a$ may no longer converge or stay close to the equilibrium $a$ and instead approach a different stable equilibrium. Starting with solutions $X^\epsilon(t)$ of \eqref{i:sde} with $X^\epsilon(0)=a$, we are then interested in the expected exit time $\tau^\epsilon$ at which $X^\epsilon(t)$ leaves the domain of attraction $\mathcal{A}$, the exit location $X^\epsilon(\tau^\epsilon)\in\partial\mathcal{A}$ through which the solution leaves $\mathcal{A}$, and the optimal (that is, most probable) escape paths that $X^\epsilon(t)$ will follow in $\mathcal{A}$ for $0\leq t\leq\tau^\epsilon$. These questions have been investigated thoroughly in many works, and we refer to \cite{freidlin_random_2012, berglund_kramers_2013, forgoston_primer_2018} for results and references.

Our focus will be on optimal escape paths. These can be found as minimizers of appropriate functionals on bounded or unbounded time intervals or, equivalently, as solutions to the corresponding Euler--Lagrange equations. 
More concrete information about the optimal paths is available if the deterministic system \eqref{i:ode} is a gradient system so that $f(x)=-\nabla V(x)$ for an appropriate potential $V\colon\mathbb{R}^n\to\mathbb{R}$. In this case, trajectories $X^\epsilon(t)$ of \eqref{i:sde} will typically leave the deterministic domain of attraction $\mathcal{A}$ of a stable equilibrium $a$ of \eqref{i:ode} through the saddle equilibrium $b\in\partial\mathcal{A}$ with $b=\arg\min\{V(z)\colon z\in\partial\mathcal{A}\}$. Moreover, $X^\epsilon(t)$ will follow the time-reversal of the heteroclinic orbit $h(t)$ of \eqref{i:ode} that connects $x=b$ at $t=-\infty$ with $x=a$ at $t=\infty$ so that $X^\epsilon(t)\simeq h(-t)$ \cite{freidlin_random_2012, berglund_kramers_2013, forgoston_primer_2018}.

For non-gradient systems, it is expected that the optimal escape path will be different from the time-reversed connecting orbit $h(-t)$. In this paper we study the case
\[
\dot{x} = -\nabla V(x) + \mu g(x),
\]
where $\mu \in \mathbb{R}$ allows for a perturbation to a gradient system, and prove the following statements.
\begin{compactenum}[(i)]
\item For symmetric perturbations $g(x)$ for which $g_x(x)=g_x(x)^*$ for all $x$, optimal escape paths are still given by the time-reversals of heteroclinic orbits connecting saddles and attractors.
\item For non-symmetric perturbations $g(x)$ (and, more precisely, when $[g_x(h(t))-g_x(h(t))^*]\dot{h}(t)$ does not vanish identically), the time-reversed heteroclinic saddle-attractor connection and the optimal escape path will differ at order $\mu$.
\end{compactenum}
The remainder of this paper is organized as follows. In \S\ref{s:2}, we briefly review the relevant large-deviations results for exit problems. Section~\ref{s:3} contains a discussion of the Euler--Lagrange equations for symmetric deterministic nonlinearities, and we prove our main result on optimal escape paths for non-gradient perturbations in \S\ref{s:4}. We illustrate our results with an example system in \S\ref{s:5} and finish with a discussion in~\S\ref{s:6}.


\section{Review: Large deviations for the exit problem}\label{s:2}

We review results on large deviations for a deterministic dynamical system $\dot{x}=f(x)$ perturbed by small noise. Consider the stochastic differential equation
\begin{equation} \label{e:sde}
\rmd x = f(x)\,\rmd t + \epsilon\,\rmd W_t,
\end{equation}
where $x\in\mathbb{R}^n$, $f\colon\mathbb{R}^n\rightarrow\mathbb{R}^n$ is smooth, $W_t$ is a standard $n$-dimensional Brownian motion, and $0<\epsilon\ll1$. We assume that $x=a$ is an asymptotically stable equilibrium of the deterministic system
\begin{equation} \label{e:ode0}
\dot{x} = f(x)
\end{equation}
with domain of attraction $\mathcal{A}:=\{x(0)\colon x(t)\mbox{ satisfies \eqref{e:ode0}, }\lim_{t\rightarrow\infty}x(t)=a\}$. We are interested in trajectories of \eqref{e:sde} that start at $x=a$ and leave $\mathcal{A}$ in finite time for $\epsilon>0$. To describe these solutions, we define the function space
\[
\Gamma_{[T_0,T_1]} := \left\{ u \in C^0([T_0,T_1],\mathbb{R}^n)\colon u(T_0)=a,\, u(t)\in\mathcal{A} \mbox{ for } T_0\leq t\leq T_1 \mbox{, and } u \mbox{ is absolutely continuous} \right\},
\]
the functional
\begin{equation}
\mathcal{I}\colon\quad
\Gamma_{T_0,T_1]} \longrightarrow\mathbb{R},\quad
u\longmapsto \mathcal{I}(u) := \frac{1}{2} \int_{T_0}^{T_1} |\dot{u}(s)-f(u(s))|^2\,\rmd s
\label{e:rate functional}
\end{equation}
on $\Gamma_{[T_0,T_1]}$, and the associated quasi-potential
\begin{equation}\label{e:V}
\mathcal{V}(z) := \inf\left\{ \mathcal{I}(u)\colon u\in\Gamma_{T_0,T_1]},\, u(T_1)=z,\, T_0<T_1 \right\}
\end{equation}
defined for $z\in\overline{\mathcal{A}}$. We set $\mathcal{V}_{\partial\mathcal{A}}:=\inf\{\mathcal{V}(z)\colon z\in\partial\mathcal{A}\}>0$ and consider solutions $X^\epsilon(t)$ with $X^\epsilon(0)=a$ of \eqref{e:sde}. The following statements are then true for these solutions under appropriate regularity assumptions on $\mathcal{A}$:
\begin{compactenum}
\item[\textbf{Exit time:}]
Let $\tau^\epsilon_\mathcal{A}=\inf\{t>0\colon X^\epsilon(t)\in\partial\mathcal{A}\}$ be the exit time from $\mathcal{A}$, then the expectation of the exit time satisfies $\lim_{\epsilon\to0} \epsilon \log E(\tau^\epsilon_\mathcal{A}) = \mathcal{V}_{\partial\mathcal{A}}$ (see \cite[Theorem~4.2]{Day}). In particular, the expected exit time is finite for finite $\epsilon$ and approaches $\infty$ as $\epsilon\to0$.
\item[\textbf{Exit location:}]
If $b:=\arg\min_{z\in\partial\mathcal{A}}\mathcal{V}(z)$ is unique, then $\lim_{\epsilon\to0}P(\{|X^\epsilon(\tau^\varepsilon_\mathcal{A})-b|<\delta\})=1$ for each $\delta>0$ (see \cite[Theorem~3.1 and \S5]{Day}). In particular, the most probable exit location on the boundary of the domain of attraction is the minimizer of the quasi-potential on the boundary.
\item[\textbf{Exit path:}]
Assume $b:=\arg\min_{z\in\partial\mathcal{A}}\mathcal{V}(z)$ is unique, then for each $\eta>0$ we have
\begin{equation}\label{st:ep}
\lim_{\epsilon\to0} P\left(\left\{ \max_{\tau^\epsilon_\eta\leq t\leq\tau^\epsilon_\mathcal{A}} |X^\epsilon(t)-u_*(t-\tau^\epsilon_\eta+\tau_\eta)| < \delta \right\}\right) = 1
\end{equation}
for each $\delta>0$, where $\tau^\epsilon_\eta$ and $\tau_\eta$ are the largest times with $|X^\epsilon(\tau^\epsilon_\eta)-a|=\eta$ and $|u_*(\tau_\eta)-a|=\eta$, respectively, and $u_*$ minimizes $\mathcal{I}$ on $(-\infty,T]$ with $u_*(-\infty)=a$ and $u_*(T)\in\partial\mathcal{A}$ (see \cite[Theorem~2.3 in Chapter~4]{freidlin_random_2012} and \cite[Lemma~3.3]{WentzellFreidlin1970}). In particular, the most probable exit path through the boundary is given by the minimizer of the functional $\mathcal{I}$ on $(-\infty,T]$.
\end{compactenum}
We can characterize the most probable exit path $u_*$ as a solution to the Euler-Lagrange equation associated with the functional $\mathcal{I}(u)$. Assume that $u_*$ is a minimizer of the functional, then taking the Gateaux derivative of the functional at $u_*$ and setting the derivative to zero leads to the Euler--Lagrange equation
\begin{equation}\label{e:EL0}
\ddot{u} = f_u(u) \dot{u} - f_{u}(u)^*(\dot{u}-f(u))
\end{equation}
that $u=u_*$ must satisfy. Following \cite{forgoston_primer_2018}, we use the Legendre transform $w=\dot{u}-f(u)$ to obtain the equivalent first-order Hamiltonian system
\begin{align}\label{e:EL1}
\dot{u} & = f(u) + w \\ \nonumber
\dot{w} & = - f_{u}(u)^* w
\end{align}
with corresponding Hamiltonian
\begin{equation}\label{e:HO}
H(u,w) = \frac{|w|^2}{2} + \langle f(u),w \rangle. 
\end{equation}

The theory of large deviations yields concrete information when the nonlinearity $f$ is the gradient of a smooth potential $V\colon\mathbb{R}^n\rightarrow\mathbb{R}$ so that $f(x)=-\nabla V(x)$. In this case, if $u\in\Gamma_{T_0,T_1]}$ satisfies $u(T_1)=b\in\partial\mathcal{A}$, then we have
\begin{align*}
\mathcal{I}(u)
& = \frac{1}{2} \int_{T_0}^{T_1} |\dot{u}(t) + \nabla V(u(t))|^2\,\rmd t 
  = \frac{1}{2} \int_{T_0}^{T_1} |\dot{u}(t) - \nabla V(u(t))|^2\,\rmd t
+ 2 \int_{T_0}^{T_1} \left\langle\dot{u}(t),\nabla V(u(t))\right\rangle\,\rmd t \\
& = \frac{1}{2} \int_{T_0}^{T_1} |\dot{u}(t)-\nabla V(u(t))|^2\,\rmd t + 2(V(b)-V(a))
 \geq 2(V(b)-V(a)).
\end{align*}
The lower bound is attained if and only if $\dot{u}(t)=\nabla V(u(t))=-f(u(t))$, that is, if $u(t)$ is a solution of the time-reversed deterministic dynamical system $\dot{y}=-f(y)$ with $u(T_0)=a$ and $u(T_1)=b$. This result can be extended to the case $[T_0,T_1]=\mathbb{R}$ to establish the following lemma.

\begin{lemma}[{\cite[Theorem~3.1 in Chapter~4]{freidlin_random_2012}}]\label{l:grad}
Assume that $f(x)=-\nabla V(x)$ is a gradient and that $b=\arg\min_{z\in\partial\mathcal{A}} V(z)$ is unique and a saddle equilibrium of $\dot{x}=f(x)$. Then there is a unique heteroclinic orbit $h(t)\in W^\mathrm{u}(b)\cap W^\mathrm{s}(a)$ of $\dot{x}=f(x)$ connecting $x=b$ and $x=a$, and the function $u_*(t):=h(-t)$ satisfies $u_*=\arg\min\{\mathcal{I}(u)\colon u\in\Gamma_\mathbb{R}\}$.
\end{lemma}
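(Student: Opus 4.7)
The plan is to extend the completing-the-square identity displayed in the excerpt, namely
\[
\mathcal{I}_{[T_0,T_1]}(u) = \frac{1}{2}\int_{T_0}^{T_1}|\dot u(t) - \nabla V(u(t))|^2\,\rmd t + 2\bigl(V(u(T_1)) - V(u(T_0))\bigr),
\]
from a bounded interval to $\mathbb{R}$ and then read off the unique minimizer. On any bounded interval with endpoints $a$ and $b$ this identity is already sharp: $\mathcal{I}\geq 2(V(b)-V(a))$ with equality if and only if $\dot u = \nabla V(u) = -f(u)$, i.e.\ $u$ solves the time-reversed gradient flow.

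First I would establish existence and uniqueness of the heteroclinic orbit $h$. Since $b$ is a saddle of $\dot x=-\nabla V(x)$ with $b\in\partial\mathcal{A}$ and $V(b)=\min_{\partial\mathcal{A}}V$, the Morse index at $b$ is at least one, and at least one branch of $W^{\mathrm u}(b)$ enters $\mathcal{A}$. Along this branch $V$ strictly decreases from $V(b)$ and the trajectory stays in a compact subset of $\overline{\mathcal{A}}$; by a LaSalle-type argument its $\omega$-limit set consists of equilibria of $\dot x=-\nabla V(x)$ in $\overline{\mathcal{A}}$ with potential strictly below $V(b)$, and minimality of $V(b)$ on $\partial\mathcal{A}$ rules out every boundary critical point, forcing the limit to be $a$. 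Uniqueness modulo time-translation then follows in the generic index-one case from one-dimensionality of $W^{\mathrm u}(b)$ together with the hypothesis that $b$ is the unique minimizer of $V$ on $\partial\mathcal{A}$.

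Next I would pass to $\Gamma_{\mathbb{R}}$. Restrict attention to paths with $\mathcal{I}(u)<\infty$, since only these can minimize. Such a path satisfies $u(-\infty)=a$ and $\dot u-\nabla V(u)\in L^2(\mathbb{R})$. A compactness/LaSalle argument using this finite-action bound together with confinement in $\overline{\mathcal{A}}$ shows that the forward limit $u(+\infty)$ exists and lies in $\partial\mathcal{A}\cup\{a\}$; the relevant case is $u(+\infty)\in\partial\mathcal{A}$. Since $V\circ u$ is then absolutely continuous on $\mathbb{R}$ with derivative $\langle\nabla V(u),\dot u\rangle$ integrable by Cauchy--Schwarz, sending $T_0\to-\infty$ and $T_1\to+\infty$ in the displayed identity yields
\[
\mathcal{I}(u) = \frac{1}{2}\int_{\mathbb{R}}|\dot u - \nabla V(u)|^2\,\rmd t + 2\bigl(V(u(+\infty))-V(a)\bigr) \geq 2\bigl(V(b)-V(a)\bigr).
\]
Equality forces $V(u(+\infty))=V(b)$, hence $u(+\infty)=b$ by uniqueness of the boundary minimizer, and $\dot u(t)=\nabla V(u(t))$ almost everywhere. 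Time-reversing, $t\mapsto u(-t)$ is then a forward trajectory of $\dot x=f(x)$ connecting $b$ to $a$, which must be $h$ up to time-shift. Evaluating the identity on $u_*:=h(-\cdot)$ shows its action equals $2(V(b)-V(a))$, confirming that $u_*$ is the unique minimizer modulo time translation.

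The main technical obstacle is the limiting step: proving that every finite-action path rooted at $a$ admits a well-defined forward limit in $\overline{\mathcal{A}}$ and that $V(u(T_1))\to V(u(+\infty))$ as $T_1\to+\infty$, so that the cross-term converges to a genuine difference of potential values. Both points follow from a careful energy/compactness argument exploiting $\mathcal{I}(u)<\infty$, boundedness of $u$, and the gradient structure, but this is where the genuine analytic content lies; the algebraic identity and the characterization of equality then deliver the lemma immediately.
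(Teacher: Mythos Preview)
Your proposal is correct and follows exactly the approach the paper uses: the paper derives the same completing-the-square identity on bounded intervals, observes that equality forces $\dot u=\nabla V(u)$, and then simply states that ``this result can be extended to the case $[T_0,T_1]=\mathbb{R}$'' while citing Freidlin--Wentzell for the details. Your write-up supplies more of that extension (existence of $h$, the limiting argument for finite-action paths) than the paper itself does, but the strategy is identical.
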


In particular, the most probable exit path is given by the time-reversal of the heteroclinic orbit of \eqref{e:ode0} that connect the saddle $b$ to the attractor $a$.


\section{Euler--Lagrange dynamics for symmetric vector fields}\label{s:3}

Consider the Euler--Lagrange equations \eqref{e:EL1} given by
\begin{align}\label{e:EL2}
\dot{u} & = f(u) + w \\ \nonumber
\dot{w} & = -f_u(u)^* w.
\end{align}
Equilibria of \eqref{e:EL2} are of the form $(u,w)=(e,0)$ with $f(e)=0$ or $(u,w)=(e,-f(e))$ with $f_u(e)^*f(e)=0$. We are interested in the equilibria of the form $(u,w)=(e,0)$, which are the equilibria $x=e$ of the deterministic system
\begin{equation}\label{e:ode}
\dot{x} = f(x), \qquad x\in\mathbb{R}^n.
\end{equation}
If $x=e$ is a hyperbolic equilibrium of \eqref{e:ode}, then $(u,w)=(e,0)$ is a hyperbolic equilibrium of \eqref{e:EL2} with $n$ stable and $n$ unstable eigenvalues. Note also that the set $w=0$ is invariant under \eqref{e:EL2} and that \eqref{e:EL2} restricted to $w=0$ becomes the deterministic system \eqref{e:ode}.

We introduce the new variable $v=w+2f(u)$ for which \eqref{e:EL2} becomes
\begin{align}\label{e:EL}
\dot{u} & = -f(u) + v \\ \nonumber
\dot{v} & = 2(f_u(u)^*-f_u(u)) f(u) + (2f_u(u)-f_u(u)^*) v
\end{align}
with conserved quantity
\begin{equation}\label{e:ELC}
C(u,v) = \frac12|v|^2-\langle f(u),v\rangle.
\end{equation}
We first focus on nonlinearities $f$ with a symmetric Jacobian.

\begin{hypothesis}\label{h1}
The nonlinearity $f(x)$ satisfies $f_x(x)=f_x(x)^*$ for all $x$.
\end{hypothesis}

Note that gradient vector fields of the form $f(x)=-\nabla V(x)$ automatically satisfy \ref{h1}, since the Hessian of the potential $V$ is always symmetric. From now on, we assume that \ref{h1} is met so that \eqref{e:EL} reduces to
\begin{align}\label{e:ELs}
\dot{u} & = -f(u) + v \\ \nonumber
\dot{v} & = f_u(u)^* v.
\end{align}
Note that \eqref{e:ELs} admits solutions of the form $(u,v)(t)=(y(t),0)$ for each solution $y(t)$ of the time-reversed differential equation
\begin{equation}\label{e:-ode}
\dot{y} = -f(y).
\end{equation}
In other words, each solution $x(t)$ of the deterministic system \eqref{e:ode} provides a solution $(u,v)(t):=(x(-t),0)$ of \eqref{e:ELs}. Equation \eqref{e:ELs} has the Hamiltonian $H(u,v)$ defined by
\begin{equation}\label{e:H}
H(u,v) = \frac{|v|^2}{2} - \langle f(u), v\rangle, \qquad
\nabla H(u,v) = \begin{pmatrix} -f_u(u) v \\ v-f(u) \end{pmatrix}.
\end{equation}
We will focus on transverse heteroclinic orbits $x(t)=h(t)$ of \eqref{e:ode} that connect a saddle $x=b$ at $t=-\infty$ to an attractor $x=a$ at $t=\infty$ as encoded in the following hypothesis.

\begin{hypothesis}\label{h2}
Assume that $x=a$ and $x=b$ are two hyperbolic equilibria of \eqref{e:ode} with $\dim W^\mathrm{s}(a)=n$ and $\dim W^\mathrm{u}(b)=1$, and that $h(t)$ is a solution of $\dot{x}=f(x)$ with $h(0)\in W^\mathrm{u}(b)\transv W^\mathrm{s}(a)$.
\end{hypothesis}

Our first result shows that the associated solution $(u_*,v_*)(t)=(h(-t),0)$ is then a heteroclinic orbit of \eqref{e:ELs} that connects the saddle $(a,0)$ to the saddle $(b,0)$ and is transverse inside the Hamiltonian level set $H^{-1}(0)$.

\begin{lemma}\label{l:trans}
Assume that Hypotheses \ref{h1} and \ref{h2} are met, then $(u_1,v_1)=(a,0)$ and $(u_2,v_2)=(b,0)$ are hyperbolic saddle equilibria of \eqref{e:ELs} with unstable and stable dimension equal to $n$, and the solution $(u_*,v_*)(t):=(h(-t),0)$ of \eqref{e:ELs} satisfies $(u_*,v_*)(0)\in W^\mathrm{u}(a,0)\transv W^\mathrm{s}(b,0)$ inside $H^{-1}(0)$.
\end{lemma}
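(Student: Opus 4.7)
The lemma naturally splits into three parts: hyperbolicity of the two equilibria, the fact that $(u_*,v_*)(t)=(h(-t),0)$ is a heteroclinic orbit lying in $H^{-1}(0)$, and the transversality of the intersection inside $H^{-1}(0)$.

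The first two parts would occupy only a few lines. Linearising \eqref{e:ELs} at any equilibrium $(e,0)$ with $f(e)=0$ produces the block triangular matrix
\[
\begin{pmatrix} -f_u(e) & I \\ 0 & f_u(e)^* \end{pmatrix},
\]
whose spectrum is $\sigma(-f_u(e))\cup\sigma(f_u(e)^*)$. By \ref{h1} the block $f_u(e)$ is symmetric, so $f_u(e)^*$ shares its real spectrum and the two blocks contribute the same real eigenvalues with opposite signs. At $a$ all eigenvalues of $f_u(a)$ are strictly negative, and at $b$ exactly one is positive with $n-1$ negative; in each case one obtains exactly $n$ stable and $n$ unstable eigenvalues. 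A direct substitution then confirms that $(u_*,v_*)(t)=(h(-t),0)$ solves \eqref{e:ELs}, the identity $H(u,0)=0$ places the orbit inside the zero level set, and \ref{h2} gives the correct limits at $t=\pm\infty$.

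The real content is transversality inside the $(2n-1)$-dimensional level set $H^{-1}(0)$. Because both invariant manifolds have dimension $n$ and both lie inside $H^{-1}(0)$, at $p_0:=(h(0),0)$ transversality in $H^{-1}(0)$ is equivalent to
\[
\dim\bigl(T_{p_0}W^\mathrm{u}(a,0)\cap T_{p_0}W^\mathrm{s}(b,0)\bigr)=1.
\]
I would invoke the standard identification of this tangent-space intersection with the space of solutions of the variational equation along $(u_*,v_*)$ that decay as $t\to\pm\infty$. The variational equation inherits the block triangular form
\[
\dot{\delta u}=-f_u(h(-t))\,\delta u+\delta v, \qquad \dot{\delta v}=f_u(h(-t))^*\,\delta v,
\]
so $\delta v$ is governed by a decoupled linear equation whose coefficient tends to $f_u(a)^*=f_u(a)$ as $t\to-\infty$. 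This limit is negative definite, so any non-trivial solution of the $\delta v$ equation blows up in backward time and only $\delta v\equiv 0$ is bounded on $\mathbb{R}$. Setting $\delta v\equiv 0$ and substituting $\zeta(s):=\delta u(-s)$ turns the remaining $\delta u$ equation into the variational equation $\dot\zeta=f_u(h(s))\zeta$ of $\dot x=f(x)$ along $h$. Hypothesis~\ref{h2} asserts $W^\mathrm{u}(b)\transv W^\mathrm{s}(a)$, and the classical correspondence between transverse heteroclinic intersections and bounded solutions of the variational equation then yields a one-dimensional space of bounded solutions spanned by $\dot h(s)$. Pulling back, the intersection $T_{p_0}W^\mathrm{u}(a,0)\cap T_{p_0}W^\mathrm{s}(b,0)$ is exactly one-dimensional and is precisely the flow direction $(-\dot h(0),0)$.

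The main obstacle is this transversality step: one must combine the block triangular structure enforced by \ref{h1} with the negative-definiteness of $f_u(a)$ to kill the $\delta v$ component, after which the problem reduces to the already-known variational analysis for the deterministic system \eqref{e:ode}. The geometric bookkeeping is also delicate, since both manifolds lie in the Hamiltonian level set and the ``missing'' codimension is exactly what allows the $1$-dimensional intersection to be transverse inside $H^{-1}(0)$ rather than appearing as an obstruction in the ambient $\mathbb{R}^{2n}$.
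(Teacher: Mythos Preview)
Your proposal is correct and follows essentially the same route as the paper: linearise in block-triangular form, use the stability of $f_u(a)$ as $t\to-\infty$ to force $\delta v\equiv0$, then reduce via time reversal to the variational equation of \eqref{e:ode} along $h$ and invoke \ref{h2} to conclude that bounded solutions form a one-dimensional space spanned by the flow direction. The only point the paper makes explicit that you leave implicit is the verification that $H^{-1}(0)$ is a genuine $(2n-1)$-manifold along the orbit, which follows from $\nabla H(h(-t),0)=(0,-f(h(-t)))^*\neq0$.
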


\begin{proof}
The linearization of \eqref{e:ELs} around $(u_*,v_*)(t)=(h(-t),0)\in\mathbb{R}^n\times\mathbb{R}^n$ is given by
\begin{align}\label{e:ELsl}
\dot{u} & = -f_u(u_*(t)) u + v \\ \nonumber
\dot{v} & = f_u(u_*(t))^* v.
\end{align}
Using the block-diagonal structure of \eqref{e:ELsl}, we see that $(a,0)$ and $(b,0)$ are hyperbolic saddle equilibria of \eqref{e:ELs} with $\dim W^\mathrm{u}(a,0)=\dim W^\mathrm{s}(b,0)=n$, $W^\mathrm{u}(a,0)\cup\dim W^\mathrm{s}(b,0)\subset H^{-1}(0)$, and $(u_*,v_*)(0)\in W^\mathrm{u}(a,0)\cap\dim W^\mathrm{s}(b,0)$. Note that $H^{-1}(0)$ is manifold of dimension $2n-1$ along the solution $(u_*(t),0)$ since the gradient $\nabla H(u_*(t),0)=(0,-f(u_*(t)))^*$ does not vanish. To establish transversality, it therefore suffices to prove that $(u_*,v_*)(t)=(\dot{u}_*(t),0)$ is, up to constant scalar multiples, the only bounded solution of \eqref{e:ELsl}. We focus first on the second, decoupled equation $\dot{v}=f_u(u_*(t))^*v$ in \eqref{e:ELsl}: since $f_u(u_*(t))=f_x(h(-t))\rightarrow f_x(a)$ as $t\to-\infty$, and the eigenvalues of $f_x(a)$ lie in the open left half-plane, we find that $v(t)\equiv0$ is the only solution of $\dot{v}=f_u(u_*(t))^*v$ that is bounded uniformly in $t\in\mathbb{R}$. Hence, we necessarily have $v=0$ in \eqref{e:ELsl}, and it suffices to identify bounded solutions of $\dot{u}=-f_u(u_*(t))u=-f_u(h(-t))u$ or, equivalently, of $\dot{x}=f_u(h(t))x$ after reversing time. The latter equation agrees with the linearization of \eqref{e:ode} around $h(t)$, and \ref{h2} ensures that $x(t)=\dot{h}(t)$ is the only solution, up to constant scalar multiples, that is bounded uniformly in $t$. This completes the proof of the lemma.
\end{proof}


\section{Euler--Lagrange dynamics under non-symmetric perturbations}\label{s:4}

In this section, we investigate whether the most probable escape path is always given by the heteroclinic orbit $y(t)=h(-t)$ of \eqref{e:-ode} that connects the attractor $x=a$ to the saddle equilibrium $x=b$. To do so, we start with a nonlinearity $f(x)$ that satisfies Hypotheses \ref{h1}-\ref{h2} and add a small non-symmetric perturbation to understand how the heteroclinic orbit $y(t)=h(-t)$ of \eqref{e:-ode} and the most probable path $u(t)$, obtained as a heteroclinic orbit of the Euler--Lagrange equation \eqref{e:EL} change and whether they differ. Thus, consider the time-reversed system
\begin{equation}\label{e:pode}
\dot{y} = - f(y) - \mu g(y)
\end{equation}
and the Euler--Lagrange equation
\begin{align}\label{e:pEL}
\dot{u} & = - f(u) - \mu g(u) + v \\ \nonumber
\dot{v} & = f_u(u)^* v + \mu \Big[ 2(g_u(u)^*-g_u(u)) (f(u)+\mu g(u)) + (2g_u(u)-g_u(u)^*) v\Big]
\end{align}
for $\mu$ with $|\mu|\ll1$. Lemma~\ref{l:trans} and the existence of a conserved quantity of \eqref{e:pEL} based on \eqref{e:EL} and \eqref{e:ELC} imply that the transverse heteroclinic orbits $y_0(t)=h(-t)$ and $(u_0,v_0)(t)=(h(-t),0)$ of \eqref{e:-ode} and \eqref{e:ELs} persist as unique heteroclinic solutions $y_*(t;\mu)$ and $(u_*,v_*)(t;\mu)$ of \eqref{e:pode} and \eqref{e:pEL}, respectively, for all small $\mu$ and are smooth in $\mu$.

\begin{theorem}\label{t:trans}
Assume that Hypotheses \ref{h1} and \ref{h2} are met and that there is an $s\in\mathbb{R}$ so that
\begin{equation}\label{e:g}
\Big[ g_u(y_0(s))^* - g_u(y_0(s)) \Big] f(y_0(s)) \neq 0,
\end{equation}
then there is a constant $\delta>0$ so that $\sup_{t\in\mathbb{R}}|y_*(t;\mu)-u_*(t;\mu)|\geq\delta|\mu|$ for all $0<|\mu|\ll1$.
\end{theorem}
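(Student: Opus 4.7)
The plan is to Taylor-expand both persisting heteroclinics to first order in $\mu$ and compare. Using Lemma~\ref{l:trans} to identify $u_0 = y_0 = h(-t)$ and noting that $v_*(\cdot;0)\equiv 0$, I would write
$y_*(t;\mu) = y_0(t) + \mu y_1(t) + O(\mu^2)$,
$u_*(t;\mu) = y_0(t) + \mu u_1(t) + O(\mu^2)$, and
$v_*(t;\mu) = \mu v_1(t) + O(\mu^2)$.
Substituting into \eqref{e:pode} and \eqref{e:pEL} and collecting the $O(\mu)$ contributions produces three linear variational ODEs along $y_0$: both $y_1$ and $u_1$ are driven by $-g(y_0)$ with linear part $-f_u(y_0)$, except that $u_1$ picks up an extra forcing $v_1$, while $v_1$ itself satisfies an adjoint equation $\dot{v}_1 = f_u(y_0)^* v_1 + 2[g_u(y_0)^*-g_u(y_0)]f(y_0)$ whose inhomogeneity is precisely the antisymmetric quantity featured in \eqref{e:g}.

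The key observation is that the difference $d_1 := u_1 - y_1$ satisfies the clean equation
\begin{equation*}
\dot{d}_1 = -f_u(y_0)\, d_1 + v_1,
\end{equation*}
and I would rule out $d_1 \equiv 0$ in two contradiction steps. Step one: if $v_1 \equiv 0$ on $\mathbb{R}$, its own ODE forces $2[g_u(y_0(t))^* - g_u(y_0(t))]f(y_0(t))$ to vanish identically, contradicting \eqref{e:g} at $t=s$; hence $v_1 \not\equiv 0$. Step two: if $d_1 \equiv 0$ then the equation above immediately gives $v_1 \equiv 0$, again a contradiction; hence $d_1 \not\equiv 0$. Continuity of $d_1$ then yields $\delta_0 := \sup_{t\in\mathbb{R}}|d_1(t)| > 0$, and the uniform-in-$t$ expansion $y_*(t;\mu)-u_*(t;\mu) = -\mu d_1(t) + O(\mu^2)$ produces $\sup_t |y_*(t;\mu)-u_*(t;\mu)| \geq |\mu|\delta_0 - C\mu^2 \geq (\delta_0/2)|\mu|$ for all sufficiently small $|\mu|$, which is the claim with $\delta = \delta_0/2$.

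The main technical obstacle I expect is ensuring that the Taylor expansion is \emph{uniform in} $t \in \mathbb{R}$, so that the $O(\mu^2)$ remainder stays small compared to $\mu\delta_0$ over the whole line rather than blowing up as $t \to \pm\infty$. This requires viewing the perturbed heteroclinics as smooth curves in a Banach space adapted to exponential decay at the hyperbolic equilibria at both ends --- the same Melnikov / exponential-dichotomy framework that underlies the persistence claim stated immediately before the theorem. A secondary, easily handled subtlety is that each heteroclinic is unique only up to time translation, so $y_1$ and $u_1$ each carry an unspecified additive multiple of $\dot{y}_0$; this ambiguity is harmless because any alternative of the form $d_1 = c\dot{y}_0$ would force $v_1 \equiv 0$ via the $d_1$-equation, which step one has already excluded.
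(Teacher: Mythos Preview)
Your proposal is correct and follows essentially the same route as the paper: expand to first order, derive the linear variational systems, observe that $d_1:=u_1-y_1$ solves $\dot d_1=-f_u(y_0)d_1+v_1$, and conclude $d_1\not\equiv0$ from $v_1\not\equiv0$ via the antisymmetry hypothesis. The paper additionally writes out explicit exponential-dichotomy integral formulas for $y_1$, $u_1$, and $d_1$ and invokes Palmer's Fredholm lemma to verify that the $v_1$ equation is solvable in the space of bounded functions (checking $\langle f(y_0),g_1\rangle_{L^2}=0$ pointwise), whereas you obtain existence of bounded $y_1,u_1,v_1$ for free from the persistence-and-smoothness statement preceding the theorem; your contradiction argument is slightly more streamlined, and your explicit treatment of the time-shift ambiguity is a point the paper leaves implicit.
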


In particular, this result implies that the most probable escape path $u_*(t;\mu)$ will not be given by the heteroclinic connection $y_*(t;\mu)$ for each $\mu$ with $0<|\mu|\ll1$.

\begin{proof}
Using smoothness in $\mu$, we expand the solutions $y_*(t;\mu)$ and $(u_*,v_*)(t;\mu)$ as
\[
y_*(t;\mu) = h(-t) + \mu y_1(t) + \rmO(|\mu|^2), \qquad
(u_*,v_*)(t;\mu) = (h(-t),0) + \mu (u_1,v_1)(t) + \rmO(|\mu|^2)
\]
and note that the $\rmO(|\mu|^2)$ terms are bounded by $|\mu|^2$ uniformly in $t\in\mathbb{R}$. Using the notation $y_0(t)=h(-t)$, the leading-order contributions $y_1(t)$ and $(u_1,v_1)(t)$ are bounded solutions of the linear differential equations
\begin{equation}\label{e:1}
\dot{y}_1 = - f_u(y_0(t)) y_1 - g(y_0(t))
\end{equation}
and
\begin{align}\label{e:2}
\dot{u}_1 & = -f_u(y_0(t)) u_1 + v_1 - g(y_0(t)) \\ \nonumber
\dot{v}_1 & =  f_u(y_0(t))^* v_1 + 2(g_u(y_0(t))^*-g_u(y_0(t))) f(y_0(t)),
\end{align}
respectively. The linear equation
\begin{equation}\label{e:3}
\dot{y} = - f_u(y_0(t)) y
\end{equation}
has exponential dichotomies $\Phi_+^\mathrm{s,u}(t,s)$ for $t,s\geq0$ and $\Phi_-^\mathrm{u}(t,s)$ for $t,s\leq0$, and the unique bounded solution of
\begin{equation}\label{e:4}
\dot{y} = - f_u(y_0(t)) y + g(t), \qquad g\in C^0(\mathbb{R},\mathbb{R}^n)
\end{equation}
is therefore given by
\begin{equation}\label{e:5}
y(t) =
\begin{cases} \displaystyle
\int_0^t \Phi_+^\mathrm{s}(t,s) g(s)\,\rmd s
+ \int_\infty^t \Phi_+^\mathrm{u}(t,s) g(s)\,\rmd s & t\geq0 \\ \displaystyle
\Phi_-^\mathrm{u}(t,0) \int_\infty^0 \Phi_+^\mathrm{u}(0,s) g(s)\,\rmd s
+ \int_0^t \Phi_-^\mathrm{u}(t,s) g(s)\,\rmd s & t\leq0.
\end{cases}
\end{equation}
We conclude that
\begin{equation}\label{e:y1}
y_1(t) =
\begin{cases} \displaystyle
- \int_0^t \Phi_+^\mathrm{s}(t,s) g(y_0(s))\,\rmd s
- \int_\infty^t \Phi_+^\mathrm{u}(t,s) g(y_0(s))\,\rmd s & t\geq0 \\ \displaystyle
- \Phi_-^\mathrm{u}(t,0) \int_\infty^0 \Phi_+^\mathrm{u}(0,s) g(y_0(s))\,\rmd s
- \int_0^t \Phi_-^\mathrm{u}(t,s) g(y_0(s))\,\rmd s & t\leq0
\end{cases}
\end{equation}
is the unique bounded solution of \eqref{e:1}. We consider \eqref{e:2} next and define
\[
g_1(s) := 2\Big[ g_u(y_0(t))^*-g_u(y_0(t)) \Big] f(y_0(t))
\]
so that we can write the equation for $v_1$ as
\begin{equation}\label{e:lu=g}
\mathcal{L} v_1 = \left[\frac{\rmd}{\rmd t}-f_u(y_0(\cdot))^* \right] v_1 = g_1.
\end{equation}
It follows from \cite[Lemma~4.2]{Palmer} that $\mathcal{L}\colon C^1(\mathbb{R},\mathbb{R}^n)\rightarrow C^0(\mathbb{R},\mathbb{R}^n)$ is a bounded Fredholm operator with index $-1$, null space $\Ns(\mathcal{L})=\{0\}$, and range $\Rg(\mathcal{L})^{\perp}=\mathbb{R}f(y_0)$, and that $\mathcal{L}v_1=g_1$ has a solution if and only if $\langle f(y_0),g_1\rangle_{L^2}=0$. We use the latter condition to check solvability of \eqref{e:lu=g}: for each $t\in\mathbb{R}$, we have
\begin{align*}
\frac{1}{2}\langle f(y_0(t)),g_1(t)\rangle
& = \langle f(y_0(t)), (g_x(y_0(t))^* - g_x(y_0(t)) f(y_0(t)) \rangle \\
& = \langle g_x(y_0(t)) f(y_0(t)), f(y_0(t)) \rangle - \langle f(y_0(t)), g_x(y_0(t))f(y_0(t)) \rangle\\
& = 0.
\end{align*}
Hence, $\langle f(y_0),g_1\rangle_{L^2}=0$, and we conclude that there exists a unique $v_1\in C^1(\mathbb{R},\mathbb{R}^n)$ with $\mathcal{L}v_1=g_1$, and $g_1\not\equiv0$ implies $v_1\not\equiv0$. It remains to solve the remaining equation
\begin{equation}\label{e:eu1}
\dot{u}_1 = -f_x(y_0(t)) u_1 + v_1(t) - g(y_0(t))
\end{equation}
where $v_1=\mathcal{L}^{-1}g_1$ as above. This equation is of the form \eqref{e:4} and its unique bounded solution $u_1(t)$ is therefore of the form \eqref{e:5}. Comparing \eqref{e:1} and \eqref{e:eu1}, we see that
\begin{align}\label{e:u1}
u_1(t) & = y_1(t) + 
\begin{cases} \displaystyle
\int_0^t \Phi_+^\mathrm{s}(t,s) v_1(s)\,\rmd s
+ \int_\infty^t \Phi_+^\mathrm{u}(t,s) v_1(s)\,\rmd s & t\geq0 \\ \displaystyle
\Phi_-^\mathrm{u}(t,0) \int_\infty^0 \Phi_+^\mathrm{u}(0,s) v_1(s)\,\rmd s 
+ \int_0^t \Phi_-^\mathrm{u}(t,s) v_1(s)\,\rmd s & t\leq0
\end{cases} \\ \nonumber
& =: y_1(t) + \Delta_1(t).
\end{align}
We conclude that $u_1(t)-y_1(t)=\Delta_1(t)$, and \eqref{e:u1} shows that $\Delta_1\equiv0$ if and only if $v_1\equiv0$. We argued above that the latter holds if and only if $g_1\equiv0$. Thus, whenever there is a $t\in\mathbb{R}$ for which
\[
(g_u(y_0(t))^*-g_u(y_0(t))) f(y_0(t)) \neq0,
\]
we conclude that $u_*(t)$ will differ from the heteroclinic connection $y_*(t)$ for each $\mu$ with $0<|\mu|\ll1$. This completes the proof of the lemma.
\end{proof}

We note that \cite{Chao_SIADS} similarly used Melnikov calculations to investigate noisy transitions in a one-dimensional system perturbed by weak periodic forcing. While we do not focus on  periodic states, our calculations are more general and apply to systems in arbitrary dimensions.


\section{Numerical illustration}\label{s:5}

To illustrate how numerically computed optimal paths compare with our theoretical results, and to demonstrate Theorem~\ref{t:trans}, we consider the planar system
\begin{equation}\label{e:ode1}
\dot{x} = f(x) + \mu g(x) = -\nabla V(x) + \mu g(x), \qquad x=(x_1,x_2)\in\mathbb{R}^2,
\end{equation}
where $V$ and the non-gradient perturbation $g$ are given by
\begin{equation} \label{eqn: gradient}
\nabla V(x) = \begin{pmatrix} x_1 (x_1^2 -1) \\ x_2  \end{pmatrix}, \qquad
g(x) = \begin{pmatrix} -x_2 \\ 0 \end{pmatrix}.
\end{equation}
The system \eqref{e:ode1} has two stable equilibria at $a:=(-1,0)$ and $(1,0)$ that are separated by the saddle $b:=(0,0)$. We consider the most probable escape path from the attractor $a$. The heteroclinic orbit $h(t)$ of \eqref{e:ode1} that connects $x=b$ at $t=-\infty$ to $x=a$ at $t=\infty$ does not depend on $\mu$ and is given by
\[
h(t) = \begin{pmatrix} \frac{-1}{\sqrt{\rme^{-2t}+1}} \\ 0 \end{pmatrix}.
\]
Lemma~\ref{l:trans} implies that the corresponding solution $(u_0,v_0)(t)\in\mathbb{R}^4$ of the associated Euler--Lagrange equation at $\mu=0$ is therefore given by
\[
\begin{pmatrix} u_0(t) \\ v_0(t) \end{pmatrix} = \begin{pmatrix} h(-t) \\ 0 \end{pmatrix}.
\]
To apply Theorem~\ref{t:trans}, we need to check condition \eqref{e:g} for our example. Since the quantity 
\[
\left[ g_x(h(-t))^* - g_x(h(-t)) \right] f(h(-t)) = \begin{pmatrix} 0 \\ 2\dot{h}_1(-t) \end{pmatrix}
\]
never vanishes, it follows from Theorem~\ref{t:trans} that the most probable exit path $u_\mathrm{exit}(t;\mu)$ will differ from the time-reversed heteroclinic orbit $h(-t)$ for $0<|\mu|\ll1$. We can explicitly solve the equation \eqref{e:eu1} for $u_1(t)\in\mathbb{R}^2$ to get
\[
u_1(t) = \begin{pmatrix} 0 \\ -\rme^t + \sqrt{\rme^{2t}+1} - \rme^{-t} \sinh^{-1}(\rme^t) \end{pmatrix} ,
\]
and the most probable exit path $u_\mathrm{exit}(t;\mu)$ is therefore given by
\begin{equation}
u_\mathrm{exit}(t;\mu) = u_0(t) + \mu u_1(t) + \rmO(|\mu|^2) = 
\begin{pmatrix} \frac{-1}{\sqrt{\rme^{2t}+1}} \\ 0 \end{pmatrix}
+ \mu \begin{pmatrix} 0 \\ \sqrt{\rme^{2t}+1} - \rme^t - \rme^{-t} \sinh^{-1}(\rme^t) \end{pmatrix} 
+ \rmO(|\mu|^2).
\label{eq:ex 1 analytical}
\end{equation}
In Figure~\ref{fig:ex1}, we compare the first-order approximation $u_\mathrm{approx}(t;\mu):=u_0(t)+\mu u_1(t)$ of $u_\mathrm{exit}(t;\mu)$ with its numerical approximation $u_\mathrm{num}(t;\mu)$, which we compute by finding the heteroclinic connection of the associated Euler--Lagrange system \eqref{e:pEL} using \textsc{auto07p}. This figure demonstrates that the most probable exit path $u_\mathrm{exit}(t;\mu)$ differs from the $\mu$-independent time-reversed heteroclinic orbit $h(t)$ of \eqref{e:ode1} for $0<|\mu|\ll1$.

\begin{figure}[ht]
\centering
\begin{subfigure}[b]{0.45\textwidth}
\includegraphics[width=\textwidth]{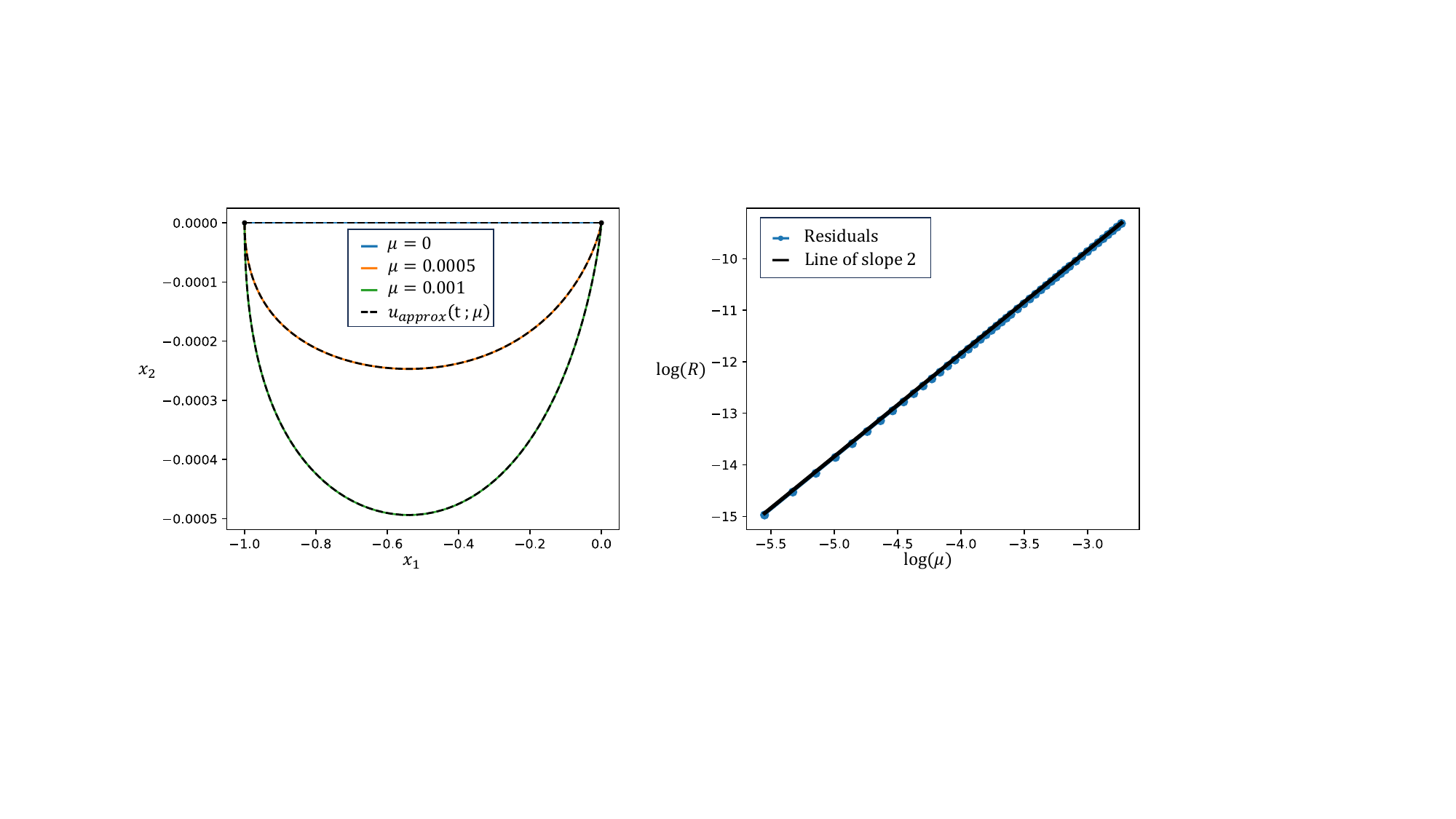}
\caption{}
\label{fig:ex1_paths}
\end{subfigure}
\hspace{5mm}
\begin{subfigure}[b]{0.45\textwidth}
\includegraphics[width=\textwidth]{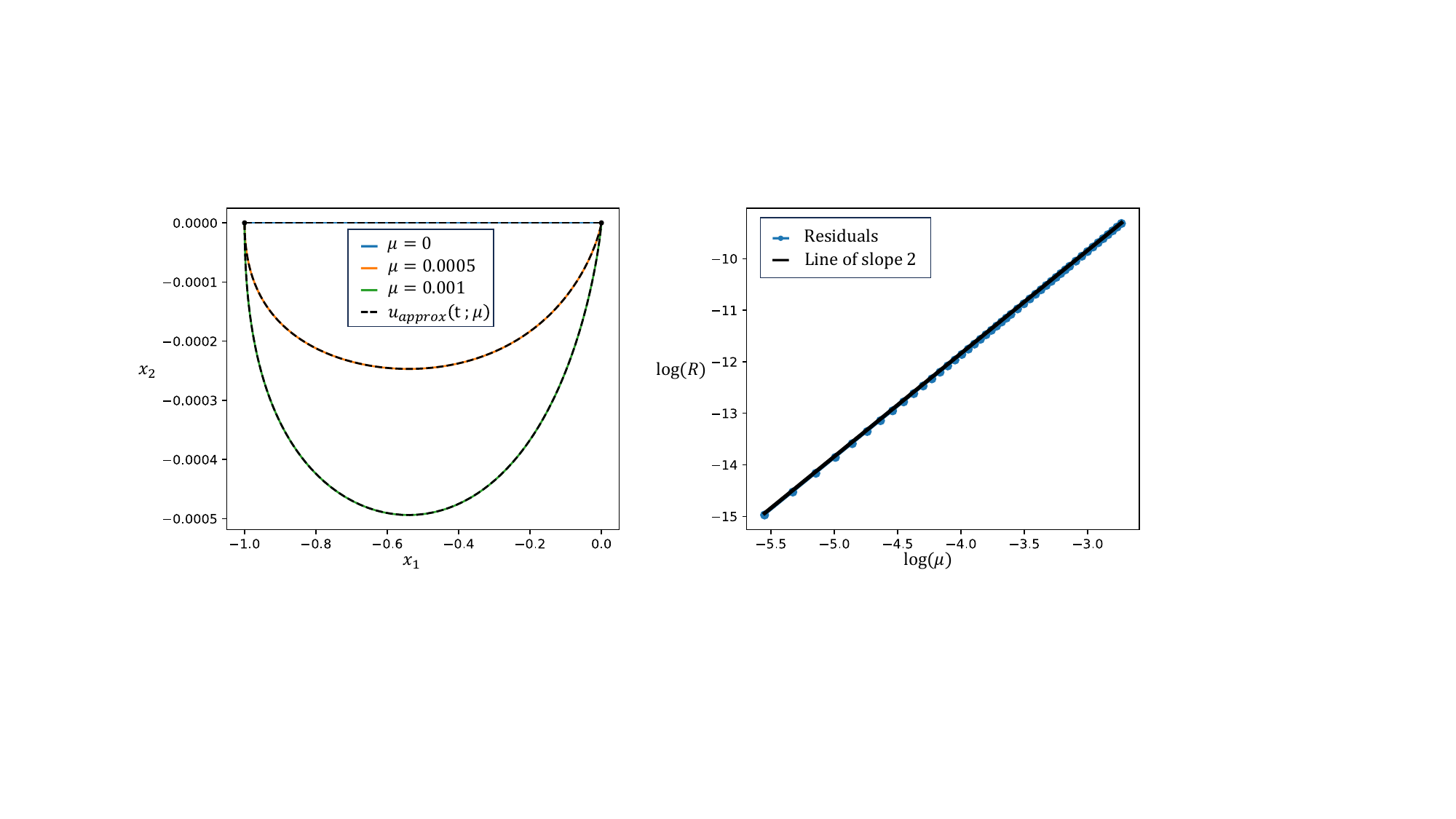}
\caption{}
\label{fig:ex1_error}
\end{subfigure}
\caption{Panel~(a): Shown are the numerical approximation $u_\mathrm{approx}(t;\mu)$  from the attractor $(-1,0)$ to the saddle $(0,0)$ of the most probable exit paths $u_\mathrm{exit}(t;\mu)$ for $\mu=0$ (blue), $\mu=0.0005$ (orange), and $\mu=0.001$ (green) together with the analytical approximations $u_\mathrm{approx}(t;\mu)$ in black (note that the $\mu$-independent time-reversed heteroclinic orbit lies on the line $x_2=0$). Panel~(b): Shown is the log-log plot of the $L^2$-difference $R:=\|u_\mathrm{approx}(\cdot;\mu)-u_\mathrm{num}(\cdot;\mu)\|_{L^2}$ of the first-order analytical approximation $u_\mathrm{approx}$ and the numerical approximation $u_\mathrm{num}$ of the exit path $u_\mathrm{exit}$ against $\mu$. As expected, the linear fit has slope $2$.}
\label{fig:ex1}
\end{figure}


\section{Discussion}\label{s:6}

The dynamics of stochastic models can differ significantly from the dynamics of an underlying deterministic system. Noise can affect the dynamics on a variety of scales and can induce rare transitions away from deterministic attractors  \cite{forgoston_primer_2018}; these are often referred to as noise-induced tipping points \cite{Ashwin12}. There are three objects relevant to noise-induced rare transitions: the probability distribution of escape points on a basin boundary, the most probable escape path from an attractor, and the expected time of this escape. For gradient systems, it is well known that the most probable escape path is given by the time-reversed heteroclinic connection between a saddle and the attractor of the deterministic system in the limit of vanishing noise amplitude. In this case, it is also known that an approximation of the expected time $\tau$ to tip from an attractor $a$ to a different attractor that is separated by a saddle $b$ is given by
\begin{equation*}
\mathbb{E}[\tau] \approx C \rme^{\frac{2(V(b)-V(a))}{\varepsilon^2}},
\label{EQ: expected time to tip formula}
\end{equation*}
where $V$ is the potential of the gradient system and $\varepsilon$ the noise amplitude. For gradient systems, the prefactor $C$ can be explicitly computed to O($\sqrt{\epsilon}$) and, in the one-dimensional case, it can be found using Laplace's method. While not the focus of this work, we believe it is possible to follow the techniques of \cite{grafke,bouchet2022path} to compute an asymptotic expansion of $C$ for the expected escape time for $\rmO(\mu)$ perturbations of a gradient system.

The focus of our work was to characterize the most probable escape paths for non-gradient perturbations of gradient systems and to understand whether, and by how much, these escape paths would differ from the time-reversed heteroclinic attractor-saddle connection. We accomplished this as follows. First, we identified the most probable escape path with an appropriate heteroclinic saddle-saddle connection of the Euler--Lagrange equation associated with the large-deviations functional \eqref{e:rate functional}. Next, we used Melnikov theory to compute the displacements of the time-reversed attractor-saddle connection of the deterministic model and of the heteroclinic saddle-saddle connection of the Euler--Lagrange equation.

Our result shows that if the Jacobian of the perturbation of the deterministic model is not symmetric along the unperturbed heteroclinic attractor-saddle connection, then the most probable escape path will differ from the deterministic attractor-saddle connection. We also showed more generally that for any system with symmetric Jacobian, whether with gradient structure or not, the most probable escape paths will coincide with the deterministic attractor-saddle connections.

One advantage of the characterization we used here is that it avoids the computation of quasi-potentials or the use of Monte--Carlo simulations. In particular, calculating the quasipotential in higher dimensions is a numerically challenging task since it involves solving the static Hamilton--Jacobi equations \cite{ballscups, CAMERON20121532}.

\section{Acknowledgements}
This material is based upon work supported by the National Science Foundation. K. Slyman was supported by the NSF under grant RTG: DMS-2038039. B. Sandstede was partially supported by the NSF through grants RTG: DMS-2038039 and DMS-2106566.


\bibliographystyle{siam}
\bibliography{BIB}

\begin{thebibliography}{10}

\bibitem{Ashwin12}
{\sc P.~Ashwin, S.~Wieczorek, R.~Vitolo, and P.~Cox}, {\em Tipping points in open systems: bifurcation, noise-induced and rate-dependent examples in the climate system}, Philosophical Transactions of the Royal Society A, 370 (2012), pp.~1166--1184.

\bibitem{berglund_kramers_2013}
{\sc N.~Berglund}, {\em Kramers' law: {validity}, derivations and generalisations}, Markov Processes and Related Fields, 19 (2011).

\bibitem{bouchet2022path}
{\sc F.~Bouchet and J.~Reygner}, {\em Path integral derivation and numerical computation of large deviation prefactors for non-equilibrium dynamics through matrix riccati equations}, Journal of Statistical Physics, 189 (2022), p.~21.

\bibitem{CAMERON20121532}
{\sc M.~Cameron}, {\em Finding the quasipotential for nongradient sdes}, Physica D: Nonlinear Phenomena, 241 (2012), pp.~1532--1550.

\bibitem{Chao_SIADS}
{\sc Y.~Chao, J.~Duan, and P.~Wei}, {\em Small-noise-induced metastable transition of periodically perturbed systems}, SIAM Journal on Applied Dynamical Systems, 23 (2024), pp.~961--981.

\bibitem{Day}
{\sc M.~V. Day}, {\em Large deviations results for the exit problem with characteristic boundary}, Journal of Mathematical Analysis and Applications, 147 (1990), pp.~134--153.

\bibitem{forgoston_primer_2018}
{\sc E.~Forgoston and R.~O. Moore}, {\em A primer on noise-induced transitions in applied dynamical systems}, SIAM Review, 60 (2018), pp.~969--1009.

\bibitem{freidlin_random_2012}
{\sc M.~I. Freidlin and A.~D. Wentzell}, {\em Random {Perturbations} of {Dynamical} {Systems}}, vol.~260 of Grundlehren der mathematischen {Wissenschaften}, Springer, Berlin, Heidelberg, 2012.

\bibitem{grafke}
{\sc T.~Grafke, T.~Schäfer, and E.~Vanden-Eijnden}, {\em Sharp asymptotic estimates for expectations, probabilities, and mean first passage times in stochastic systems with small noise}, Communications on Pure and Applied Mathematics, 77 (2024), pp.~2268--2330.

\bibitem{ballscups}
{\sc B.~C. Nolting and K.~C. Abbott}, {\em Balls, cups, and quasi-potentials: quantifying stability in stochastic systems}, Ecology, 97 (2016), pp.~850--864.

\bibitem{Palmer}
{\sc K.~J. PALMER}, {\em Exponential dichotomies and transversal homoclinic points}, Journal of Differential Equations, 55 (1984), pp.~225 -- 256.

\bibitem{WentzellFreidlin1970}
{\sc A.~D. Ventsel and M.~I. Freidlin}, {\em On small random perturbations of dynamical systems}, Russian Mathematical Surveys, 25 (1970), pp.~1--55.

\end{thebibliography}

\end{document}